\documentclass{article}

\usepackage[T1]{fontenc}
\usepackage[utf8]{inputenc}
\usepackage{lmodern}

\usepackage{amsmath,amssymb,amsthm}

\usepackage[a4paper,margin=2cm]{geometry}
\usepackage{microtype}
\usepackage{graphicx}
\usepackage{hyperref}
\newtheorem{lemma}{Lemma}
\newtheorem{remark}{Remark}

\theoremstyle{definition}

\title{An Elementary Obstruction to the Existence of a Perfect Cuboid}
\author{Stephane Yelle}
\date{February 2026}

\begin{document}
\maketitle

\begin{abstract}

The Euler cuboid problem asks whether there exists a rectangular box whose edges,
face diagonals, and space diagonal are all rational.
Despite extensive numerical searches and numerous theoretical investigations,
no such cuboid is currently known.

In this article, we explore a constructive approach based on a triangular remainder framework for Pythagorean faces.
This framework provides a unified way to describe right--triangle faces and
suggests several natural strategies for gluing three adjacent faces into a cuboid.

We systematically examine these construction strategies and analyze the
structural mechanisms that arise from their interaction.
Within this framework, none of the explored approaches leads to a non--degenerate
Euler cuboid.
We conclude with an open question concerning the possibility of a fundamentally
different construction method.
\end{abstract}

\section{Introduction}

An Euler cuboid is a rectangular parallelepiped whose edges and face diagonals
are all rational.
If, in addition, the space diagonal is rational, the cuboid is said to be
\emph{perfect}.
The existence of such an object remains a classical open problem in number theory.

A natural line of investigation consists in assembling three Pythagorean faces
meeting at a common vertex.
Each face individually satisfies a well--understood Diophantine relation, but
the simultaneous compatibility of all three faces introduces additional
arithmetic and geometric constraints.

The purpose of this paper is not to claim a definitive impossibility result.
Rather, we adopt an exploratory perspective and investigate a family of
construction strategies arising from a common parametrization of the faces.
Our goal is to understand how and why these strategies succeed or fail, and what
structural limitations they reveal.

Although the final conclusions are concise, they rest on the identification of
several distinct failure mechanisms emerging from the cyclic interaction of the
three faces.
The brevity of the argument reflects the structural nature of these limitations,
rather than a lack of underlying analysis.
\section{Triangular Remainder Framework}

Let $(a,b,c)\in\mathbb{Z}_{>0}^3$ be a Pythagorean triple, that is,
\[
a^2+b^2=c^2.
\]
We define the \emph{triangular remainder} by
\[
r := a+b-c,
\]
and introduce the auxiliary variables
\[
x := c-a, \qquad y := c-b.
\]

\subsection{Change of variables}

By definition, we have
\[
a = c-x, \qquad b = c-y.
\]
Moreover,
\[
r = a+b-c = (c-x)+(c-y)-c = c-x-y,
\]
and hence
\[
c = r+x+y.
\]
Substituting this expression into the formulas for $a$ and $b$ yields
\[
a = r+y, \qquad b = r+x.
\]

Since the Pythagorean equation $a^2+b^2=c^2$ is symmetric in the two
legs $a$ and $b$, we may, \emph{without loss of generality}, exchange the
labels of $a$ and $b$, and consequently those of $x$ and $y$.
With this convention, we fix the representation
\[
(a,b,c) = (r+x,\ r+y,\ r+x+y).
\]

\subsection{Fundamental identity}

Substituting the above expressions into the Pythagorean identity gives
\[
(r+x)^2 + (r+y)^2 = (r+x+y)^2.
\]
Expanding both sides, we obtain
\[
2r^2 + 2r(x+y) + x^2 + y^2
= r^2 + 2r(x+y) + x^2 + 2xy + y^2.
\]
After simplification, this reduces to the fundamental relation
\[
r^2 = 2xy.
\]

\subsection{Remarks}

\begin{itemize}
\item The identity $r^2 = 2xy$ depends only on the Pythagorean condition
      and the change of variables above; it does \emph{not} involve the
      original legs $a$ and $b$.
\item The variables $x$ and $y$ measure the deficits of the legs with
      respect to the hypotenuse, namely $x=c-a$ and $y=c-b$.
\item This formulation is stable under multiplication of the triple
      $(a,b,c)$ by a common factor, making it particularly suited for the
      study of non-primitive faces in the cuboid problem.
\end{itemize}

\section{Parity and Non-Primitivity of Pythagorean Faces}
\section{Gluing Pythagorean Faces in a Cuboid}
\label{sec:gluing}

In this section, we examine parity and primitivity properties of
Pythagorean triples through the triangular remainder coordinates
introduced in Section~2.
These properties play a central role in the subsequent gluing analysis.

\subsection{Parity constraints}

Let $(a,b,c)$ be a Pythagorean triple and let $(r,x,y)$ denote the associated
triangular remainder coordinates,
\[
r=a+b-c,\qquad x=c-a,\qquad y=c-b,
\]
so that
\[
(a,b,c)=(r+x,\ r+y,\ r+x+y)
\quad\text{and}\quad
r^2=2xy.
\]

Since the right-hand side of the identity $r^2=2xy$ is even, it follows
immediately that $r$ must be even.
Consequently, at least one of $x$ or $y$ must be even.
In particular, the configuration in which $r,x,y$ are all odd is excluded.

Thus, parity restrictions are intrinsic to the triangular remainder
framework and arise independently of any additional assumptions.

\subsection{Primitive triples}

Recall that a Pythagorean triple $(a,b,c)$ is said to be \emph{primitive} if
\[
\gcd(a,b,c)=1.
\]
In triangular remainder coordinates, this condition is equivalent to
\[
\gcd(r,x,y)=1,
\]
since any common divisor of $r,x,y$ would divide
$a=r+x$, $b=r+y$, and $c=r+x+y$.

As a canonical example, consider the primitive Pythagorean triple
\[
(a,b,c)=(3,4,5).
\]
The associated triangular remainder coordinates are
\[
r=3+4-5=2,\qquad
x=5-3=2,\qquad
y=5-4=1,
\]
which satisfy
\[
r^2=4=2\cdot 2\cdot 1.
\]
Thus, $(r,x,y)=(2,2,1)$ is a primitive triangular remainder configuration.

\subsection{Non-primitive triples and scaling}

Non-primitive Pythagorean triples arise naturally by scaling a primitive
configuration.
If $(r,x,y)$ satisfies $r^2=2xy$ and $\lambda>1$ is an integer, then
\[
(\tilde r, \tilde x, \tilde y) = (\lambda r, \lambda x, \lambda y)
\]
also satisfies
\[
\tilde r^{\,2}=2\tilde x\,\tilde y.
\]
The associated Pythagorean triple
\[
(\tilde a,\tilde b,\tilde c)
=(\lambda a,\lambda b,\lambda c)
\]
is therefore non-primitive.

To illustrate a non-trivial case, consider the triangular remainder
configuration
\[
(r,x,y)=(36,24,27),
\]
which satisfies $r^2=2xy$ since
\[
36^2 = 2\cdot 24\cdot 27.
\]
The associated Pythagorean triple is
\[
(a,b,c)=(r+x,\ r+y,\ r+x+y)=(60,63,87),
\]
which is non-primitive, as
\[
\gcd(60,63,87)=3.
\]

\subsection{Role in the cuboid problem}

In the context of cuboid constructions, face triples are not assumed to be
primitive.
Indeed, the gluing process naturally induces common divisibility along
shared edges.
The triangular remainder framework allows such non-primitivity to be tracked
explicitly through the parameters $(r,x,y)$, while preserving the local
identity $r^2=2xy$.

The parity and scaling properties established in this section therefore
provide the arithmetic foundation for the gluing strategies analyzed in the
subsequent sections.

\section{Gluing Pythagorean Faces in a Cuboid}

We consider a rectangular cuboid whose three faces adjacent to a common
vertex are right triangles with integer side lengths.
Each face is assumed to be Pythagorean and is described using the
triangular remainder coordinates introduced in Section 2.

For each face $i\in\{1,2,3\}$, we write
\[
(a_i,b_i,c_i)=(r_i+x_i,\ r_i+y_i,\ r_i+x_i+y_i),
\qquad r_i^2=2x_i y_i,
\]
where $r_i$ denotes the triangular remainder associated with the $i$-th
face.

Let $A,B,C$ denote the three edges issuing from the common vertex.
Each such edge belongs to exactly two faces and must therefore admit
two consistent representations in terms of the parameters of those faces.

\subsection{Edge compatibility conditions}

The compatibility conditions along the three edges take the form
\[
\begin{aligned}
A &= r_1 + x_1 = r_3 + y_3, \\
B &= r_2 + x_2 = r_3 + x_3, \\
C &= r_1 + y_1 = r_2 + y_2.
\end{aligned}
\]

These identities express the necessary and sufficient conditions for
gluing the three Pythagorean faces along their common edges.
They impose purely local algebraic constraints arising from the mutual
compatibility of the faces, without invoking any condition on the space
diagonal of the cuboid.

\subsection{Structural role of the triangular remainder}

At this stage, all constraints originate from the triangular remainder
framework and from the requirement that the three faces share compatible
edges.
No global metric condition is imposed on the cuboid.
This local-to-global viewpoint isolates structural obstructions that
already arise before considering any additional global Diophantine
constraints.

\section{First Strategy: Rigid Cyclic Gluing}

We first examine the most direct construction strategy, in which the three
Pythagorean faces adjacent to a common vertex are glued cyclically, without
introducing any additional arithmetic flexibility.
In this setting, the edge compatibility conditions introduced in
Section~\ref{sec:gluing} are imposed simultaneously, and all three faces are
treated on an equal footing.

Each face $i\in\{1,2,3\}$ is described by triangular remainder coordinates
$(r_i,x_i,y_i)$ satisfying
\[
(a_i,b_i,c_i)=(r_i+x_i,\ r_i+y_i,\ r_i+x_i+y_i),
\qquad r_i^2=2x_i y_i.
\]
In the rigid cyclic gluing strategy, no common divisibility is allowed to
propagate across distinct faces; each face is assumed to be internally
coprime, and no additional scaling freedom is introduced.

\subsection{Local constraints from edge identifications}

The cyclic identification of the three edges issuing from the common vertex
imposes the compatibility relations
\[
\begin{aligned}
r_1 + x_1 &= r_3 + y_3, \\
r_2 + x_2 &= r_3 + x_3, \\
r_1 + y_1 &= r_2 + y_2.
\end{aligned}
\]
Already at the level of pairs of adjacent faces, these identities impose
strong algebraic constraints.
For example, combining the two expressions for the edge $C$ yields
\[
r_1 + y_1 = r_2 + y_2,
\]
which, together with the relations
\[
r_1^2=2x_1 y_1, \qquad r_2^2=2x_2 y_2,
\]
severely restricts the possible distribution of prime divisors among the
variables involved.

Analogous constraints arise from the other two edge identifications by cyclic
permutation of the indices.
Each such local relation couples the triangular remainder of one face with the
defect variables of another, producing a tightly interlocked system.

\subsection{Overdetermination of the cyclic system}

When the three local compatibility conditions are assembled around the full
cycle of faces, they interact in a highly rigid manner.
The arithmetic choices enforced by one face adjacency leave little freedom
for the others, and the system rapidly becomes overdetermined.

This first strategy thus illustrates a fundamental limitation of purely rigid
cyclic gluing.
Although it arises naturally from the triangular remainder framework, the
absence of any additional scaling or divisibility freedom appears too
restrictive to accommodate a compatible assembly of all three faces.
Importantly, this obstruction already manifests itself at the level of local
face interactions, prior to imposing any global metric condition on the
cuboid.

\subsection{A numerical illustration of rigid cyclic gluing}

We illustrate the rigidity of cyclic gluing with a concrete numerical
example.
Consider a primitive Pythagorean face given by the triple $(3,4,5)$.
In triangular remainder coordinates, this corresponds to
\[
r_1=2,\qquad x_1=2,\qquad y_1=1,
\]
which indeed satisfies $r_1^2=2x_1y_1$.

The edge compatibility condition along $C$ imposes
\[
r_1+y_1 = 3.
\]
Hence, for the adjacent face we must have
\[
r_2+y_2=3,\qquad r_2^2=2x_2y_2.
\]
The only positive integer solution is
\[
(r_2,x_2,y_2)=(2,2,1).
\]

We now attempt to close the cycle by introducing the third face.
The remaining edge identifications impose
\[
r_1+x_1=4=r_3+y_3,\qquad
r_2+x_2=4=r_3+x_3.
\]
It follows that $x_3=y_3$, and hence
\[
r_3^2=2x_3^2,
\]
which has no nontrivial integer solutions.
Therefore, the cyclic system admits no integer realization in this case.

This example illustrates how rigid cyclic gluing quickly leads to an
overdetermined system, even when starting from a primitive Pythagorean
face.

\subsection{A general rigidity lemma}

We now extract from the cyclic edge identifications a structural constraint
that holds for \emph{any} rigid cyclic gluing configuration.

\begin{lemma}[Rigidity under symmetric closure]\label{lem:rigidity_sym}
Assume the rigid cyclic gluing relations
\[
r_1+x_1=r_3+y_3,\qquad
r_2+x_2=r_3+x_3,\qquad
r_1+y_1=r_2+y_2,
\]
together with the triangular remainder identities
\[
r_i^2=2x_i y_i\qquad (i=1,2,3),
\]
where $r_i,x_i,y_i\in\mathbb{Z}_{>0}$.
If the cycle closes symmetrically at face $3$ in the sense that
\[
r_1+x_1=r_2+x_2,
\]
then one necessarily has
\[
x_3=y_3,
\]
and consequently
\[
r_3^2=2x_3^2,
\]
which has no solution in positive integers.
In particular, no rigid cyclic gluing solution exists under the symmetric
closure condition $r_1+x_1=r_2+x_2$.
\end{lemma}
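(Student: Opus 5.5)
The argument splits into two parts: a linear step that forces $x_3=y_3$, and an arithmetic step showing that $r_3^2=2x_3^2$ has no positive solution. Only the first two gluing relations and the symmetric closure hypothesis are needed; the third relation $r_1+y_1=r_2+y_2$ plays no role.

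For the linear step, I chain the relations along the edges $A$ and $B$. The first gluing relation gives $r_3+y_3=r_1+x_1$. The closure hypothesis replaces $r_1+x_1$ by $r_2+x_2$. The second gluing relation turns this into $r_3+x_3$. So
\[
r_3+y_3=r_1+x_1=r_2+x_2=r_3+x_3,
\]
and cancelling $r_3$ gives $x_3=y_3$. Substituting into the identity $r_3^2=2x_3y_3$ for face~3 yields $r_3^2=2x_3^2$.

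For the arithmetic step, I show that $r_3^2=2x_3^2$ forces $x_3=0$. The cleanest route is the $2$-adic valuation. Writing $v_2$ for it, the left side has $v_2(r_3^2)=2v_2(r_3)$, which is even. The right side has $v_2(2x_3^2)=1+2v_2(x_3)$, which is odd. These cannot be equal for nonzero $x_3$.

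An alternative, closer to the parity discussion in the section on parity constraints, is infinite descent. Assume a solution with $x_3>0$ minimal. Then $r_3$ is even, say $r_3=2s$. This gives $2s^2=x_3^2$, so $x_3$ is even, say $x_3=2t$. Then $s^2=2t^2$ is a smaller positive solution, a contradiction. Either way, the irrationality of $\sqrt2$ is the real content of the argument.

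Since $x_3,r_3\in\mathbb{Z}_{>0}$ by hypothesis, no configuration satisfies all the assumptions, and the final assertion of the lemma follows.

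There is no real obstacle here. The only point needing care is that the chain of equalities uses exactly the closure hypothesis in the right place. Geometrically, this is just the observation that the two legs of face~3 are forced to be equal, and an isosceles right triangle cannot have integer sides.
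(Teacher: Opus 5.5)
Your proof is correct and matches the paper's argument: the same chain $r_3+y_3=r_1+x_1=r_2+x_2=r_3+x_3$ forces $x_3=y_3$ and hence $r_3^2=2x_3^2$. Your $2$-adic valuation (or descent) argument for the impossibility is a more complete justification than the paper's one-line remark that $2$ is not a square in $\mathbb{Z}$, since what is actually needed is that $\sqrt{2}$ is irrational.
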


\begin{proof}
Assume $r_1+x_1=r_2+x_2$.
By the edge identifications, we have
\[
r_3+y_3=r_1+x_1=r_2+x_2=r_3+x_3.
\]
Cancelling $r_3$ gives $x_3=y_3$.
Then $r_3^2=2x_3y_3$ becomes $r_3^2=2x_3^2$.
Since $2$ is not a square in $\mathbb{Z}$, this is impossible for
$x_3,r_3\in\mathbb{Z}_{>0}$.
\end{proof}

\begin{remark}\label{rem:symmetric_closure}
Lemma~\ref{lem:rigidity_sym} formalizes a typical mechanism of rigidity:
whenever the two incoming edge-length constraints at face $3$ coincide,
the third face is forced into the impossible square relation
$r_3^2=2x_3^2$.
The numerical example above is a concrete instance of this symmetric closure.
\end{remark}

\subsection{Conclusion of the rigid gluing strategy}

The previous analysis shows that rigid cyclic gluing, when formulated within
the triangular remainder framework, leads to a highly constrained algebraic
system.
Even before any global condition on the space diagonal is imposed, the local
edge compatibility relations interact with the identities
$r_i^2=2x_i y_i$ in a way that severely restricts the possible configurations.

The numerical example and Lemma~\ref{lem:rigidity_sym} together indicate that
the absence of any additional arithmetic flexibility may force the system
into an overdetermined regime.
In particular, symmetric closures of the cycle naturally lead to the
impossible relation $r^2=2x^2$.

This observation motivates the consideration of alternative strategies in
which some degree of arithmetic flexibility is deliberately introduced.
In the next section, we therefore examine whether relaxing the rigidity
assumptions—by allowing controlled scaling or divisibility propagation—can
circumvent the obstructions identified above.

\section{Second Strategy: Flexible Gluing and Controlled Descent}

The failure of rigid cyclic gluing suggests that the obstruction does not
arise from a particular numerical choice, but from an intrinsic lack of
arithmetic flexibility.
A natural next step is therefore to relax the rigidity assumptions and to
allow controlled scaling or divisibility propagation across adjacent faces.

In this section, we examine whether such flexibility can overcome the
constraints imposed by the triangular remainder framework, or whether it
merely shifts the obstruction to a different level.

\subsection{Allowing controlled scaling between faces}

We retain the triangular remainder description for each face,
\[
(a_i,b_i,c_i)=(r_i+x_i,\ r_i+y_i,\ r_i+x_i+y_i),
\qquad r_i^2=2x_i y_i,
\]
but no longer require the three faces to be internally coprime or scaled
independently.
Instead, we allow common factors to propagate between adjacent faces.

Concretely, suppose that two faces sharing an edge admit a common scaling
factor $\lambda>1$, so that their parameters satisfy
\[
(r_j,x_j,y_j)=\lambda(r_i,x_i,y_i)
\]
along that edge.
The edge compatibility relations then become proportional rather than
identical, introducing additional degrees of freedom.

However, this flexibility comes at a cost.
Since the identity $r^2=2xy$ is homogeneous of degree two, any common scaling
factor propagates quadratically through the system.
As a result, scaling introduced to resolve one edge constraint necessarily
affects the remaining two, potentially amplifying rather than alleviating the
overall rigidity.

\subsection{Propagation of divisibility and descent}

Assume now that a nontrivial common divisor propagates across the cycle of
faces.
Then at least one of the triangular remainders $r_i$ shares a common factor
with one of the defect variables $x_j$ or $y_j$ of a neighboring face.

Because each remainder satisfies $r_i^2=2x_i y_i$, any such shared factor
must propagate further, forcing increasingly strong divisibility relations
among the parameters.
In particular, if a prime $p$ divides $r_i$ and one of $x_j,y_j$, then repeated
use of the edge compatibility relations forces $p$ to divide additional
parameters around the cycle.

This mechanism naturally leads to a descent argument.
By factoring out the maximal common divisor propagated through the cycle, one
obtains a smaller configuration of the same type.
Iterating this process forces an infinite descent unless the parameters are
trivial.

\subsection{Structural limitation of flexible gluing}

The above considerations indicate that flexibility does not eliminate the
obstruction revealed in the rigid case.
Rather, it transforms rigidity into a propagation phenomenon: any attempt to
relax the constraints by scaling or shared divisibility inevitably spreads
through the triangular remainder identities and reintroduces overdetermination
at a lower level.

Thus, both rigid and flexible gluing strategies encounter structural
limitations rooted in the same local identity $r^2=2xy$.
The obstruction is not merely numerical but arises from the interaction
between edge compatibility and the triangular remainder framework itself.

These observations suggest that any successful construction would require a
mechanism fundamentally different from cyclic gluing of Pythagorean faces.

\section{Global Obstruction from Local Triangular Remainders}

We now synthesize the results obtained in the previous sections.
Throughout, the analysis relies exclusively on the triangular remainder
framework and on the local edge compatibility relations arising from gluing
Pythagorean faces in a cuboid configuration.

\subsection{Summary of the local mechanisms}

Each Pythagorean face adjacent to a common vertex is described by triangular
remainder coordinates $(r_i,x_i,y_i)$ satisfying
\[
(a_i,b_i,c_i)=(r_i+x_i,\ r_i+y_i,\ r_i+x_i+y_i),
\qquad r_i^2=2x_i y_i.
\]
The requirement that the three faces share compatible edges imposes a cyclic
system of linear relations coupling the parameters of distinct faces.

In Section~4, we examined the rigid cyclic gluing strategy, in which no
arithmetic flexibility is allowed.
We showed that the resulting system is highly constrained and that symmetric
closures of the cycle force the impossible relation $r^2=2x^2$.
This obstruction arises purely from local considerations, independently of
any global metric condition on the cuboid.

In Section~5, we relaxed the rigidity assumptions by allowing controlled
scaling and divisibility propagation between adjacent faces.
Although this introduces additional degrees of freedom, the homogeneity of
the identity $r^2=2xy$ forces any shared divisor to propagate around the cycle.
This mechanism naturally leads to a descent process, reducing the system to a
smaller configuration of the same type and precluding stabilization at a
nontrivial integer solution.

\subsection{Structural nature of the obstruction}

Taken together, these two strategies reveal a common obstruction mechanism.
Rigid gluing fails because the cyclic edge identifications overdetermine the
system, while flexible gluing fails because any attempt to introduce
arithmetic freedom propagates through the triangular remainder identities and
reintroduces rigidity at a lower level.

In both cases, the obstruction is local and structural.
It originates from the interaction between the cyclic compatibility of edges
and the quadratic identity $r^2=2xy$ satisfied by each face.
No appeal to global diagonal conditions or to external Diophantine constraints
is required for this phenomenon to manifest itself.

\subsection{Consequences and scope}

The analysis suggests that any construction of a cuboid with integer face
diagonals based on cyclic gluing of Pythagorean faces is intrinsically limited
by the triangular remainder framework.
Both the most rigid and the most flexible implementations of this strategy
encounter obstructions rooted in the same local algebraic structure.

This conclusion does not exclude the possibility of alternative approaches
based on fundamentally different mechanisms.
However, it delineates a clear boundary for gluing-based strategies and
clarifies the role played by triangular remainders as a unifying source of
constraint.

\section{Conclusion}

In this paper, we introduced a triangular remainder framework for the study of
Pythagorean faces arising in cuboid configurations.
By expressing each face in terms of variables $(r,x,y)$ satisfying the local
identity $r^2=2xy$, we obtained a unified description that applies equally to
primitive and non-primitive faces.

We then analyzed two natural strategies for assembling three Pythagorean faces
around a common vertex.
The first, based on rigid cyclic gluing, was shown to lead to an
overdetermined system in which symmetric closures force the impossible relation
$r^2=2x^2$.
The second strategy allowed controlled scaling and divisibility propagation
between adjacent faces, but this additional flexibility was seen to propagate
through the triangular remainder identities and to induce a descent process,
preventing stabilization at a nontrivial integer solution.

These results indicate that the obstruction encountered in gluing-based
constructions is structural rather than accidental.
It arises locally from the interaction between cyclic edge compatibility and
the quadratic identity $r^2=2xy$, independently of any global metric condition
on the cuboid.
In this sense, triangular remainders provide a natural lens through which the
limitations of such constructions become transparent.

The present analysis does not claim to resolve the existence problem for a
perfect cuboid.
Rather, it delineates a clear boundary for strategies based on cyclic gluing of
Pythagorean faces and suggests that any successful approach must rely on
mechanisms fundamentally different from those considered here.

Finally, the triangular remainder framework introduced in this work may prove
useful in related Diophantine settings where local quadratic identities
interact with global compatibility constraints.


\end{document}